\documentclass[reqno,12pt]{amsart}

\usepackage{epsf}
\usepackage{graphics}
\usepackage{graphicx}
\usepackage{amssymb}
\usepackage{amsmath}

\date{}

\theoremstyle{plain}
\newtheorem{theorem}{Theorem}

\newtheorem{proposition}{Proposition}

\newtheorem{rem}{Remark}

\theoremstyle{definition}

\theoremstyle{remark}

\newtheorem*{remark}{Remark}

\def\C{{\mathbb C}}
\def\N{{\mathbb N}}
\def\Z{{\mathbb Z}}

\def\F{{\mathbb F}}

\title{Two strand twisting}

\author{L.~A'Campo, S.~Baader, L.~Ferretti, L.~Ryffel}


\begin{document}

\begin{abstract}
We prove that fibred knots cannot be untied with $\bar{t}_{2k}$-moves, for all $k \geq 2$. More generally, we give an upper bound on the number of two strand twist operations that allow to untie a knot with non-trivial HOMFLY polynomial, in terms of the minimal crossing number, and the braid index. As a by-product, we prove that the braid index of a two-bridge knot cannot be lowered by applying $t_{2k}$-moves, for all but finitely many $k \in \N$.
\end{abstract}


\maketitle

\section{Introduction}

Twisting is a family of local operations on oriented links in $S^3$, introduced by Ralph Fox in the late fifties~\cite{Fo}. The special case of two strand twisting comes in two families called $t_m$-moves and $\bar{t}_m$-moves. The effect of a $t_m$-move (resp. $\bar{t}_m$-move) is best described on oriented link diagrams, where it inserts $m$ consecutive positive crossings into two parallel strands with equal orientations (resp. opposite orientations), as in the two strand braid $\sigma_1^m \in B_2$. We will restrict our attention to even numbers $m \in \N$, since we want all moves to preserve the number of link components. 
The two simplest moves, $t_2$ and $\bar{t}_{2}$, are better known as (positive) crossing changes. While every knot can be untied by a finite sequence of crossing changes, there exist obstructions for knots to be unknotted by higher order $t_{2k}$- and $\bar{t}_{2k}$-moves. In particular, the Alexander-Conway polynomial of knots with coefficients reduced modulo~$k$, $\nabla_K(z) \in \Z/k\Z[z]$, is invariant under $\bar{t}_{2k}$-moves~\cite{Fo, Pr}. This has a remarkable consequence for fibred knots, since these have monic Alexander-Conway polynomial~\cite{Neu}. 

\begin{theorem}
\label{fibredknots}
Let $K$ be a non-trivial fibred knot. For all $k \geq 2$, the knot~$K$ is not related to the trivial knot by a finite sequence of
$(\bar{t}_{2k})^{\pm 1}$-moves.
\end{theorem}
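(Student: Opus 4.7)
The plan is to combine the two facts recalled just above the statement: (i) the Alexander--Conway polynomial mod $k$ is invariant under $\bar{t}_{2k}$-moves (and hence under their inverses), and (ii) a fibred knot has monic Alexander--Conway polynomial. Since the trivial knot has $\nabla(z)=1$, any knot $\bar{t}_{2k}$-equivalent to the unknot must satisfy $\nabla_K(z)\equiv 1\pmod k$. So it suffices to show that a non-trivial fibred knot fails this congruence.

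First I would write $\nabla_K(z)=\varepsilon z^{2g}+c_{2g-2}z^{2g-2}+\cdots+c_0$, where $g=g(K)$ is the Seifert genus of $K$ and $\varepsilon=\pm 1$ by fibredness (this is the content of Neumann's result cited in the excerpt). Since $K$ is assumed non-trivial, it is in particular non-fibred-trivial, so the genus of its fibre surface is strictly positive; thus $2g\geq 2$ and the leading monomial $\varepsilon z^{2g}$ is genuinely present.

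Next, reducing modulo $k$: for every $k\geq 2$ the leading coefficient $\varepsilon=\pm 1$ remains a unit in $\Z/k\Z$, hence does not vanish. Therefore $\nabla_K(z)\bmod k$ still has degree $2g\geq 2$ in $\Z/k\Z[z]$, and in particular is not equal to $1$. Combined with the invariance principle from the introduction, this rules out any finite sequence of $(\bar{t}_{2k})^{\pm 1}$-moves taking $K$ to the unknot.

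There is essentially no obstacle once the two invoked results are taken as black boxes: the argument is a single-line congruence check. The only mild point to be careful about is that the invariance of $\nabla_K(z)\bmod k$ applies equally to $\bar{t}_{2k}$- and $(\bar{t}_{2k})^{-1}$-moves, which is immediate because inverting a move simply reverses the sequence. The rest is the observation that ``monic of positive degree'' survives reduction modulo any integer $\geq 2$.
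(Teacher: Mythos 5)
Your argument is correct and is precisely the reasoning the paper itself gives (in sketch form, in the paragraph preceding the theorem): invariance of $\nabla_K(z)\bmod k$ under $(\bar{t}_{2k})^{\pm 1}$-moves plus monicity of $\nabla_K$ in degree $2g\geq 2$ for a non-trivial fibred knot. The only cosmetic slip is attributing the monicity result to ``Neumann'' rather than Neuwirth.
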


As far as the authors know, this statement never found its way into the literature, most likely since Neuwirth's theory of fibred knots was developed after Fox' note on congruence classes of knots.

In contrast with Theorem~\ref{fibredknots}, the situation is very different with $t_{2k}$-moves. For all $n \in \N$, there exists a fibred knot that can be unknotted by $t_{2k}$-moves, for all $k \leq n$ , for example the closure of the braid
$$\sigma_1^{1+\text{lcm}(2,4,6 \ldots 2n)} \in B_2,$$
as shown in Figure~1 for $n=4$.

\smallskip
\begin{figure}[htb]
\begin{center}
\raisebox{-0mm}{\includegraphics[scale=0.7]{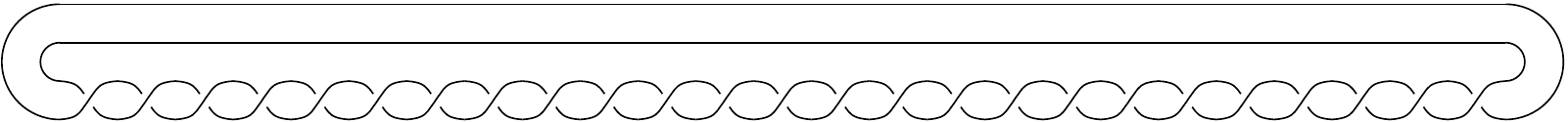}}
\caption{Closure of the braid $\sigma_1^{25}$, which can be unknotted by $t_{2k}$-moves, for $k=1,2,3,4,6,12,13$.}
\end{center}
\end{figure}

Nevertheless, the two families of two strand twisting operations share a common property: most knots can be unknotted by finitely many different types of $t_{2k}$-moves and $\bar{t}_{2k}$-moves only. Our main result makes this quantitative, in terms of well-known link invariants. Let $c(K) \in \N$ and $b(K) \in \N$ be the minimal crossing number and the braid index of a link~$K$, respectively. The latter is defined as the minimal number of strands among all braids whose closure represents the link~$K$. Furthermore, let $P_K(a,z) \in \Z[a^{\pm 1},z^{\pm 1}]$ be the HOMFLY polynomial of~$K$, defined in the next section.

\begin{theorem}
\label{finiteness}
Let $K$ be a knot with $P_K(a,z) \neq 1$.
\begin{enumerate}
\item The set $\{k \geq 3 \mid \text{$K$ can be unknotted by $(t_{2k})^{\pm 1}$-moves}\}$ has at most $c(K)-1$ elements.
\item The set $\{k \geq 2 \mid \text{$K$ can be unknotted by $(\bar{t}_{2k})^{\pm 1}$-moves}\}$ has at most $b(K)-1$ elements.
\end{enumerate}
\end{theorem}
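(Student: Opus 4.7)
The plan is to translate two strand twisting moves into congruences for the HOMFLY polynomial modulo a polynomial arising from the expansion of $\sigma_1^{2k}$ in a two-strand Hecke algebra, and then conclude via classical degree bounds: Morton's inequality for part (1), and the Morton--Franks--Williams inequality for part (2).

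The first step is a Hecke-algebra computation. In the two-strand Hecke algebra determined by the HOMFLY skein $a\sigma_1 - a^{-1}\sigma_1^{-1} = z$, expanding $\sigma_1^{2k}$ in the basis $\{1,\sigma_1\}$ by induction using the Chebyshev-type polynomials $S_n(z)$ defined by $S_{n+1} = zS_n + S_{n-1}$, $S_0 = 0$, $S_1 = 1$, yields $\sigma_1^{2k} = a^{-2k}\,S_{2k-1}(z) + a^{-(2k-1)}\,S_{2k}(z)\,\sigma_1$, together with the Cassini-type identity $S_{2k-1}(z)^2 \equiv 1 \pmod{S_{2k}(z)}$. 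Translating back to HOMFLY, a $(t_{2k})^{\pm 1}$-move relating $L$ and $L'$ yields $P_{L'}\equiv a^{\mp 2k}\,S_{2k-1}(z)\,P_L\pmod{S_{2k}(z)}$. Squaring both sides and using $S_{2k-1}^2\equiv 1$ leaves $P_{L'}^{\,2}\equiv a^{\mp 4k}\,P_L^{\,2}\pmod{S_{2k}(z)}$. Iterating along a sequence of $n_+$ positive and $n_-$ negative $(t_{2k})^{\pm 1}$-moves that unknots $K$ produces
\[ P_K^{\,2}\equiv a^{4k(n_+-n_-)}\pmod{S_{2k}(z)}. \]

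For part (1), I would finish using Morton's inequality $\deg_z P_K \le c(K)-1$. Writing $M_k = 4k(n_+-n_-)$, the polynomial $P_K^{\,2} - a^{M_k}$ is divisible by $S_{2k}(z)$, whose $z$-degree is $2k-1$. If $P_K^{\,2}\ne a^{M_k}$, then its $z$-degree is at most $2\deg_z P_K \le 2(c(K)-1)$, forcing $k\le c(K)-1$; hence the set has at most $c(K)-1$ elements. The degenerate case $P_K^{\,2} = a^{M_k}$, meaning $P_K$ is a signed monomial in $a$, is handled separately using $P_K\ne 1$ and the MFW breadth bound. Part (2) is the analogous argument on antiparallel strands: the corresponding expansion of $\sigma_1^{2k}$ in the antiparallel two-strand skein yields a congruence of $P_K^{\,2}$ modulo a polynomial in $a$ of degree of order $k$, and the Morton--Franks--Williams inequality $E_aP_K - e_aP_K \le 2(b(K)-1)$ replaces Morton's bound to give $k \le b(K)-1$.

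The main obstacles I anticipate are: (i) the careful treatment of the signed-monomial degenerate case in part (1), where one must rule out pathological knots with $P_K\ne 1$ but $P_K$ a pure monomial in $a$; and (ii) the Hecke-algebra bookkeeping on antiparallel strands for part (2), where the correct two-strand skein-module basis and the associated polynomial in $a$ need to be pinned down.
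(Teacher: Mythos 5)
Your part (1) is a genuinely different packaging of the same underlying computation: the paper diagonalises the $2\times 2$ transfer matrix and specialises at $z=\zeta_{2k}-\zeta_{2k}^{-1}$, then counts roots of the product of the extreme $a$-degree coefficients, whereas you keep the whole congruence modulo $S_{2k}(z)$ and bound each admissible $k$ individually by a degree comparison; your route in fact shows the set is contained in an interval $\{3,\dots,c(K)\}$, which is slightly stronger than a cardinality bound. Two repairs are needed, though. First, the one-crossing resolvent appearing in the expansion of $\sigma_1^{2k}$ is a two-component link, so its HOMFLY polynomial has a $z^{-1}$ term; since $z$ divides $S_{2k}(z)$, the divisibility you actually get in $\Z[a^{\pm1},z]$ is by $S_{2k}(z)/z$, of degree $2k-2$ --- harmless for the bound, but it should be said. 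Second, and more seriously, your proposed tool for the degenerate case is wrong: if $P_K=\pm a^j$, the Morton--Franks--Williams breadth bound gives $0\leq 2(b(K)-1)$, which excludes nothing. What you need is the Lickorish--Millett evaluation $P_K(a,a^{-1}-a)=1$, which forces a monomial HOMFLY polynomial to equal $1$; this is precisely the fact the paper records at the start of its Section~3.

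The genuine gap is in part (2). There is no braid word $\sigma_1^{2k}$ on two antiparallel strands, and the relevant skein expansion is not of the Chebyshev form: resolving the $2k$-crossing antiparallel twist region gives
$P_{L'}=a^{2k}P_{L}+(a+a^3+\cdots+a^{2k-1})\,z\,P_{L_\infty}$,
where $L_\infty$ is the constant smoothing, so the correct congruence is $P_{L'}\equiv P_L$ modulo $1+a^2+\cdots+a^{2k-2}$ (no squaring enters, and the iteration gives $P_K\equiv 1$). If instead you only retain the specialisation $a=\zeta_{2k}$ and argue via divisibility by the cyclotomic polynomial, the degree comparison yields $\phi(2k)\leq 2(b(K)-1)$, which does \emph{not} imply $k\leq b(K)-1$, since $\phi(2k)$ can be far smaller than $k$; so the conclusion you assert does not follow from the argument as described. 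With the full modulus $1+a^2+\cdots+a^{2k-2}$ (of $a$-span $2k-2$) dividing the top $z$-coefficient $h_l(a)$ of $P_K$ (nonzero with $l\geq 1$, again by $P_K\neq 1$ and the evaluation above), the Morton--Franks--Williams inequality gives $2k-2\leq 2b(K)-2$, hence $k\leq b(K)$ and the set $\{k\geq 2\}$ has at most $b(K)-1$ elements --- so your strategy can be salvaged, but the skein-module bookkeeping you defer is exactly the content of the proof. The paper avoids this by specialising at $a=\zeta_{2k}$ and counting the roots of $h_l$, each admissible $k$ accounting for at least two of them.
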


The condition $P_K(a,z) \neq 1$ seems quite generic, and is possibly even satisfied by all non-trivial knots. Nevertheless, it would be great to derive bounds of the above kind for all non-trivial knots, for example by using Khovanov homology, which is known to detect the trivial knot~\cite{KM}. The mere existence of finite upper bounds in Theorem~\ref{finiteness} might also admit a geometric proof, due to its resemblance with Thurston's hyperbolic Dehn surgery theorem~\cite{Th}. However, the latter deals with fixed twist regions, which provides an a priori weaker statement.

The technique used in our proof allows a precise determination of the set of unknotting moves for certain classes of knots, such as two strand torus knots and twist knots, see Proposition~\ref{twist}. More importantly, we obtain the following refined result for two-bridge knots. 

\begin{proposition}
\label{twobridge}
Let $K$ be a two-bridge knot. For all but finitely many $k \in \N$, all knots $K'$ that are related to $K$ by a finite sequence of $(t_{2k})^{\pm 1}$-moves satisfy
$$b(K') \geq b(K).$$
\end{proposition}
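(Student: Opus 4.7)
The plan is to combine the Morton--Franks--Williams (MFW) inequality
\[
b(L)\;\geq\;\tfrac{1}{2}\bigl(E(L)-e(L)\bigr)+1,
\]
where $E(L),\,e(L)$ are the maximum and minimum $a$-exponents of the HOMFLY polynomial $P_L(a,z)$, with the classical fact (Murasugi) that MFW is sharp on two-bridge (indeed, on alternating) knots, so that $b(K)=\tfrac{1}{2}(E(K)-e(K))+1$ for the given two-bridge knot $K$. Writing $s(L):=E(L)-e(L)$ for the $a$-spread, it therefore suffices to prove that, outside a finite set of $k$, every knot $K'$ reached from $K$ by a sequence of $(t_{2k})^{\pm 1}$-moves satisfies $s(K')\geq s(K)$.

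To analyse a single move, I would work in the HOMFLY skein on two parallel (same-orientation) strands. Iterating $\sigma_1^2 = a^{-2}+a^{-1}z\,\sigma_1$ in the Hecke algebra $H_2$ gives
\[
\sigma_1^{2k}\;=\;a^{-2k}\,\chi_{2k-1}(z)\;+\;a^{-2k+1}\,\chi_{2k}(z)\,\sigma_1,
\]
where $\chi_n(z)$ is defined by $\chi_0=0,\ \chi_1=1,\ \chi_{n+1}=z\chi_n+\chi_{n-1}$. Inserting $\sigma_1^{2k}$ into a parallel tangle of a link $L$ therefore yields $L'$ with
\[
P_{L'}(a,z)\;=\;a^{-2k+1}\chi_{2k}(z)\,P_{L_+}(a,z)\;+\;a^{-2k}\chi_{2k-1}(z)\,P_L(a,z),
\]
where $L_+$ is obtained from $L$ by inserting a single positive crossing into that tangle; an analogous identity (with shifts $+2k$ and $+2k+1$) governs $(t_{2k})^{-1}$-moves. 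Since the two summands carry different $a$-shifts, the extremal $a$-exponents of $P_{L'}$ are determined, barring cancellation, by those of $P_L$ and $P_{L_+}$.

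A direct case analysis comparing $E(L_+)$ with $E(L)$ (and the minima), together with the HOMFLY skein $aP_{L_+}-a^{-1}P_{L_-}=zP_L$ and the parity of $a$-exponents, shows that whenever no cancellation of extremal $a$-coefficients occurs one has $s(L')\geq s(L)$. The cancellations that could defeat this inequality correspond to polynomial identities among the extremal $z$-polynomial coefficients of $P_L$, $P_{L_+}$ and the Chebyshev-like polynomials $\chi_{2k-1},\chi_{2k}$; each such identity is a polynomial condition on $k$, hence excludes only finitely many $k$.

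The main obstacle is to render this step-wise inequality uniform along an entire sequence of moves, since the intermediate knots $L_i$ carry, a priori, different finite exceptional sets. I would address this by tracking a finer invariant --- for example, the $z$-degree (or leading $z$-coefficient) of the extremal $a$-term of $P_{L_i}$ --- and showing that it strictly increases under every $(t_{2k})^{\pm 1}$-move with $k$ outside the finite exceptional set of $K$, so that all subsequent moves remain in the generic regime where no extremal cancellation is possible. Chaining $s(L_{i+1})\geq s(L_i)$ along the sequence then yields $s(K')\geq s(K)$, and MFW delivers $b(K')\geq b(K)$, as required.
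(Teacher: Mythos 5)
Your overall frame is the paper's: the Morton--Franks--Williams inequality $2b(L) \geq \text{$a$-span}(P_L)+2$ combined with Murasugi's sharpness for two-bridge knots, reducing everything to the claim that the $a$-span does not drop along the sequence of moves for all but finitely many $k$. But your method for that reduction has a genuine gap, and you have correctly located it yourself: the per-move cancellation analysis produces an exceptional set of $k$ that depends on the intermediate link $L_i$, and since the sequence of moves is arbitrary (arbitrary length, arbitrary intermediate links), the union of these exceptional sets need not be finite, nor controlled by $K$. Your proposed repair --- a finer quantity that ``strictly increases under every $(t_{2k})^{\pm 1}$-move'' --- cannot work as stated: a $t_{2k}$-move followed by its inverse returns to the same link, so no invariant can strictly increase under both a move and its inverse. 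Without a working patch here, the argument does not close.

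The paper avoids this entirely by specialising \emph{before} analysing the moves. Setting $z=\zeta_{2k}-\zeta_{2k}^{-1}$, Proposition~\ref{invariance}(i) says a positive $t_{2k}$-move multiplies $P(a,\zeta_{2k}-\zeta_{2k}^{-1})$ exactly by $a^{2k}$ (for $k\geq 3$), so the $a$-span of the specialised polynomial is \emph{exactly} preserved along the entire sequence --- no cancellation analysis, no per-step exceptional sets. Since MFW survives specialisation, this gives $2b(K')\geq \text{$a$-span}(P_K(a,\zeta_{2k}-\zeta_{2k}^{-1}))+2$ (Proposition~\ref{braidindex}). The only place finitely many $k$ are excluded is the single comparison $\text{$a$-span}(P_K(a,\zeta_{2k}-\zeta_{2k}^{-1}))=\text{$a$-span}(P_K(a,z))$, which is a condition on the fixed knot $K$ alone: the extremal $a$-coefficients of $P_K$ are nonzero polynomials in $z$ with finitely many roots, and the numbers $\zeta_{2k}-\zeta_{2k}^{-1}$ are pairwise distinct. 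If you want to salvage your write-up, replace the step-wise Hecke-algebra bookkeeping with this specialisation and your proof becomes essentially the paper's.
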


Results providing a lower bound for the braid index within an equivalence class of knots are not so common; an interesting one was recently derived by Feller and Hubbard: closures of quasipositive braids with sufficiently many full twists are not concordant to quasipositive knots with a strictly smaller braid index~\cite{FH}.

The proofs of Theorem~\ref{finiteness} and Proposition~\ref{twobridge} are presented in Sections~3 and~4; the next section contains the necessary fundamentals about the HOMFLY polynomial. We would like to emphasise that most of the statements included here are applications of Przytycki's theory on two strand twisting~\cite{Pr}, but none of them seem to appear in the literature so far.

\section{HOMFLY polynomial and twisting}

The HOMFLY polynomial $P_K(a,z) \in \Z[a^{\pm 1},z^{\pm 1}]$ of links~$K$ is defined by the following skein relation, together with the normalisation $P_O(a,z)=1$ for the trivial knot~$O$~\cite{HOMFLY}:
$$a^{-1} P_{\widehat{\beta \sigma_i^2}}(a,z)-aP_{\widehat{\beta}}(a,z)=zP_{\widehat{\beta \sigma_i}}(a,z).$$
Here $\widehat{\beta}$ stands for the closure of a braid with $n$ strands, $\beta \in B_n$, and $\sigma_i \in B_n$ denotes any standard generator in the braid group $B_n$.
As observed by various authors, the skein relation is well-suited to compute the effect of $t_m$-moves on links, see for example~\cite{NS,Pr}. In fact, the following Proposition is basically a reformulation of Corollaries~1.2 and~1.8 in~\cite{Pr}, except for the case $k=2$.

\begin{proposition}
\label{invariance}
Let $K$ be a knot, and let $\zeta_{2k} \in \C$ be a primitive $2k$-th root of unity.
\begin{enumerate}
\item[(i)] If $K'$ is a knot obtained from $K$ by a positive $t_{2k}$-move with $k \geq 3$, then
$$P_{K'}(a,\zeta_{2k}-\zeta_{2k}^{-1})=a^{2k}P_{K}(a,\zeta_{2k}-\zeta_{2k}^{-1}).$$
\item[(ii)] If $K'$ is a knot obtained from $K$ by a positive $t_{4}$-move, then
$$P_{K'}(a,0)=a^4P_{K}(a,0) \in \F_2[a^{\pm 1}].$$
\item[(iii)] If $K'$ is a knot obtained from $K$ by a positive $\bar{t}_{2k}$-move with $k \geq 2$, then
$$P_{K'}(\zeta_{2k},z)=P_{K}(\zeta_{2k},z).$$
\end{enumerate}
\end{proposition}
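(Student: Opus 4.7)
The plan is to apply the HOMFLY skein relation iteratively to the $\sigma_i^n$ twist region, extracting a second-order linear recurrence in $n$ for $P_n := P_{L_n}$, where $L_n$ denotes the link obtained by placing $\sigma_i^n$ at the twist region. For parts (i) and (ii) (parallel strands), the paper's skein relation rearranges to $P_{n+2} = a^2 P_n + az P_{n+1}$. Substituting $z_0 = \zeta_{2k} - \zeta_{2k}^{-1}$, the associated characteristic polynomial $x^2 - a z_0 x - a^2$ factors as $(x - a\zeta_{2k})(x + a\zeta_{2k}^{-1})$, whose roots are distinct for $k \geq 3$ since $\zeta_{2k} + \zeta_{2k}^{-1} \neq 0$. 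Writing $P_n = A(a\zeta_{2k})^n + B(-a\zeta_{2k}^{-1})^n$ and observing that both roots raised to the $2k$-th power equal $a^{2k}$, we conclude $P_{2k} = a^{2k}(A+B) = a^{2k} P_0$, which is (i).

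For part (ii), the same argument does not apply at $z = 0$ because $P_1, P_3$ are HOMFLY polynomials of two-component links and carry a pole in $z$. Setting $c_n(a) := \lim_{z \to 0} z P_n(a, z)$ for odd $n$, evaluating the skein relation at $z = 0$ gives $a^{-1} P_{n+2}(a, 0) - a P_n(a, 0) = c_{n+1}(a)$, and the auxiliary skein relation among the odd-index terms (multiplied by $z$ before taking the limit) yields $c_3 = a^2 c_1$. Iterating gives
$$P_4(a, 0) = a^4 P_0(a, 0) + 2 a^3 c_1(a),$$
whose second summand vanishes modulo $2$.

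For part (iii), the two strands are antiparallel, and the oriented smoothing at a crossing of the twist region now resolves into a horizontal cap-and-cup rather than two parallel vertical arcs, disconnecting the twist region. The skein relation at one crossing of the $\sigma_i^{n+2}$ twist therefore yields an inhomogeneous recurrence
$$a^{-1} P_{n+2} - a P_n = z\, T_n,$$
where $T_n$ is the HOMFLY polynomial of the smoothed link. The decisive geometric observation is that $T_n = T$ is independent of $n$: after smoothing, the $n+1$ residual crossings lie on a single arc that is capped off locally within the former twist region, and such a capped twisted arc is ambient-isotopic to an untwisted arc---each $\sigma_i^2$ is undone by a $360^\circ$ rotation of the cap around its axis, and any final unpaired self-crossing is removed by a Reidemeister I move (both preserve the HOMFLY since it is a link invariant). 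The recurrence then admits the closed-form solution
$$P_{2k} = a^{2k} P_0 + a z T \cdot \frac{a^{2k} - 1}{a^2 - 1},$$
and at $a = \zeta_{2k}$ with $k \geq 2$ the factor $a^{2k} - 1$ vanishes while $a^2 - 1 \neq 0$, giving $P_{2k}(\zeta_{2k}, z) = P_0(\zeta_{2k}, z)$, which is (iii).

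The principal obstacle is the geometric step in part (iii)---justifying that $T_n$ does not depend on $n$. What must be verified is that the untwisting rotation of the cap is a genuine ambient isotopy, not a mere framing change; this is ultimately a local claim in a neighborhood of the former twist region, since the cap is entirely interior to the smoothed tangle and the rotation can be realised without moving the tangle boundary or the rest of the link.
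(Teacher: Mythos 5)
Your proof is correct, and the overall strategy---setting up a linear recursion in the number of crossings of the twist region via the skein relation---is the same as the paper's. Part (i) is identical in substance: the paper diagonalises the $2\times 2$ recursion matrix, while you use the characteristic roots $a\zeta_{2k}$, $-a\zeta_{2k}^{-1}$ directly; these are the same computation. Your part (ii) is a genuinely different, and arguably cleaner, route: the paper works at $z=2i$, observes $M^4\equiv a^4I \pmod 4$, and then argues that the term $-4ia^5P_{L_0}(a,2i)$ is even because $P_{L_0}$ has a $z^{-1}$-term; you instead pass to the limit $z\to 0$ and obtain the exact identity $P_4(a,0)=a^4P_0(a,0)+2a^3c_1(a)$, from which the mod $2$ statement is immediate. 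The key input is the same in both versions (the coefficient $c_1(a)$ of $z^{-1}$ in the HOMFLY polynomial of a two-component link is an honest Laurent polynomial in $a$), but your version avoids the bookkeeping at $z=2i$. For part (iii) the paper simply defers to Przytycki's Theorem 1.7, flagging that the only identity needed is $a+a^3+\cdots+a^{2k-1}=0$ at $a=\zeta_{2k}$; your closed form is exactly that sum, since $az\,\frac{a^{2k}-1}{a^2-1}=z(a+a^3+\cdots+a^{2k-1})$, and writing the denominator as $a^2-1$ rather than $a+a^{-1}$ is precisely what makes $k=2$ unproblematic. Finally, the geometric step you flag is fine, and admits an even more elementary justification than rotating the cap: since the twist region is a ball containing nothing but the two strands, the cap together with the crossing adjacent to it always bounds an embedded disc, i.e.\ forms a Reidemeister I kink; removing these kinks one at a time shows that the capped $(n+1)$-crossing tangle is isotopic rel endpoints to a trivial arc, so $T_n$ is independent of $n$ as an oriented link, not merely up to framing.
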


\begin{proof}
For the first two statements, let $L_0,L_1,L_2,\ldots$ be a family of oriented link diagrams, which coincide except in a single twist region consisting of two parallel oriented strands with a certain number of positive crossings -- $n$ for the link diagram $L_n$ -- again as in the two strand braid $\sigma_1^n \in B_2$. The skein relation for the HOMFLY polynomial translates into the recursion
$$P_{L_{n+1}}(a,z)=a^2P_{L_{n-1}}(a,z)+azP_{L_{n}}(a,z),$$
which admits the following matrix representation:
$$
\begin{pmatrix}
P_{L_{n}}  \\
P_{L_{n+1}}
\end{pmatrix}
=
\begin{pmatrix}
0 & 1 \\
a^2 & az 
\end{pmatrix}
\begin{pmatrix}
P_{L_{n-1}}  \\
P_{L_{n}}
\end{pmatrix}.
$$
Let $\zeta_{2k} \in \C$ be a primitive $2k$-th root of unity with $k \geq 3$. The specialisation $z=\zeta_{2k}-\zeta_{2k}^{-1} \neq 0$ gives rise to a recursion matrix
$$ 
M=
\begin{pmatrix}
0 & 1 \\
a^2 & a(\zeta_{2k}-\zeta_{2k}^{-1})
\end{pmatrix}
$$
with $\text{tr}(M)=a(\zeta_{2k}-\zeta_{2k}^{-1})$ and $\det(M)=-a^2$, hence
$$ 
M^{2k}
=
\begin{pmatrix}
a^{2k} & 0 \\
0 & a^{2k} 
\end{pmatrix}.
$$
The last step requires $M$ to have two distinct eigenvalues $a\zeta_{2k}$, $-a\zeta_{2k}^{-1}$, which is the case for $k \geq 3$, but not for $k=2$.
This implies the first statement of the proposition.

In the case $k=2$, i.e. for $z=2i$, the recursion matrix $M$ has a double eigenvalue $ia$ and is not diagonalisable. However, the equation
$$
M^4
=
\begin{pmatrix}
a^4 & 0 \\
0 & a^4 
\end{pmatrix}
$$
still holds modulo~$4$, which implies that for knots, the reduction of the Laurent polynomial $P_K(a,0) \in \F_2[a^{\pm 1}]$ is still invariant under $t_4$-moves, up to multiplication with powers of $a^4$. Indeed, the above reduction yields
$$P_{L_{5}}(a,2i)=-4ia^5P_{L_{0}}(a,2i)+5a^4P_{L_{1}}(a,2i).$$
Now suppose that $L_1$ is a knot. Then $L_0$ is a two-component link, thus $P_{L_0}(a,z)$ contains a simple pole in~$z$. In turn, the expression $P_{L_0}(a,2i)$ contains a term of the form $1/2i$, which, after multiplication with $-4ia^5$, becomes zero modulo two. This leaves us with the equation $P_{L_{5}}(a,0)=a^4P_{L_{1}}(a,0) \in \F_2[a^{\pm 1}]$, i.e. with the second statement of the proposition.

As for the third statement, we refer the reader to Przytycki's proof of Theorem~1.7 in~\cite{Pr}, keeping the following two facts in mind. First, the sign convention for the skein relation of the HOMFLY polynomial is different in~\cite{Pr}, leading to an overall sign $(-1)^k$ in the formula there. Second, the case $k=2$ is excluded in Corollary~1.8 of the same reference, since the proof contains a fraction with denominator $a+a^{-1}$. However, the only fact needed there is the equality
$$a+a^3+a^5+\ldots+a^{2k-1}=0,$$
which also holds for $k=2$, i.e. for $a=\zeta_4=i$, yielding the desired formula for all $k \geq 2$.
\end{proof}

\begin{remark}
Reductions of the form $P_K(a,N) \in \F_p[a^{\pm 1}]$ have been studied in~\cite{Ba}.
Proposition~\ref{invariance} provides infinitely many reductions of the HOMFLY polynomial, invariant under $t_{2k}$-moves, up to multiplication with powers of $a^{2k}$. Indeed, let $k \geq 3$, and let $p \in 2k\N+1$ be a prime number. Then the finite field $\F_p$ has a primitive $2k$-th root of unity $\zeta_{2k}$, since the multiplicative group of 
$\F_p$ is cyclic of order $p-1$. Let $N=\zeta_{2k}-\zeta_{2k}^{-1} \in \F_p$. The statement of Proposition~\ref{invariance} carries over to the reduction $P_K(a,N) \in \F_p[a^{\pm 1}]$: let $K'$ be a link obtained from a link $K$ by a positive $t_{2k}$-move with $k \geq 3$. For every prime number $p \in 2k\N+1$, and $N=\zeta_{2k}-\zeta_{2k}^{-1} \in \F_p$, the equality
$$P_{K'}(a,N)=a^{2k}P_{K}(a,N)$$
holds in $\F_p[a^{\pm 1}]$.
Thanks to Dirichlet's theorem on arithmetic progressions~\cite{Di}, the number of primes $p$ in $2k\N+1$ is infinite, for each fixed $k \geq 2$. As a consequence, we obtain infinitely many invariant reductions $P_K(a,N) \in \F_p[a^{\pm 1}]$ under $t_{2k}$-moves, provided $k \geq 3$. Similarly, there exist infinitely many reductions of the form $P_K(M,z) \in \F_p[z^{\pm 1}]$ invariant under $\bar{t}_{2k}$-moves, for all $k \geq 2$.
\end{remark}

\section{Bounding the order of untwisting}

The skein relation of the HOMFLY polynomial implies that the specialisation $P_K(a,a^{-1}-a)$ is constantly one. As a consequence, the polynomial $P_K(a,z)$ cannot be of the form $a^m f(z)$, unless $P_K(a,z)=1$.

\begin{proof}[Proof of Theorem~\ref{finiteness}]
For the first statement, let $K$ be a knot with $P_K(a,z) \neq 1$ and let $d=\text{deg}_z(P_K)$. Write
the terms of lowest and highest $a$-degree in $P_K(a,z)$ as $a^m f(z)$ and $a^n g(z)$, respectively, with $m,n \in \Z$, $m<n$, and $f(z),g(z) \in \Z[z]$. Suppose that $K$ is related to the trivial knot by a finite sequence of $(t_{2k})^{\pm 1}$-moves, for some $k \geq 3$. Then, by Proposition~\ref{invariance}, either $f(\zeta_{2k}-\zeta_{2k}^{-1})$ or $g(\zeta_{2k}-\zeta_{2k}^{-1})$ must be zero, since the trivial knot~$O$ satisfies $P_O(a,z)=1$. Therefore, the product $f(z)g(z)$ vanishes at $z=\zeta_{2k}-\zeta_{2k}^{-1}$. The degree bound $\text{deg}(f(z)g(z)) \leq 2d$ implies that the set
$$\{k \geq 3 \mid \text{$K$ can be unknotted by $(t_{2k})^{\pm 1}$-moves}\}$$
has at most $d$ elements, since the minimal polynomial of the purely imaginary number $\zeta_{2k}-\zeta_{2k}^{-1}$ has degree at least two. This yields the first statement, thanks to Franks-Williams and Morton's upper bound for the $z$-degree of the HOMFLY polynomial~\cite{FW,Mo}:
$$\text{deg}_z(P_K) \leq c(K)-1.$$

For the second statement, write
$$P_K(a,z)=h_0(a)+h_1(a)z^2+h_2(a)z^4+\ldots+h_l(a)z^{2l}$$
with $l \geq 1$ and $h_l(a) \neq 0$, since $P_K(a,z) \neq 1$. Suppose that $K$ is related to the trivial knot by a finite sequence of $(\bar{t}_{2k})^{\pm 1}$-moves, for some $k \geq 2$. Then, by Proposition~\ref{invariance}, $h_l(\zeta_{2k})=0$. Therefore, the set
$$\{k \geq 2 \mid \text{$K$ can be unknotted by $(\bar{t}_{2k})^{\pm 1}$-moves}\}$$
has at most as many elements as half the number of roots of the Laurent polynomial $h_l(a)$, again since the minimal polynomial of the number~$\zeta_{2k}$ has degree at least two. As a consequence, the above set has at most $\frac{1}{2}\text{$a$-span}(P_{K}(a,z))$ elements, where $\text{$a$-span}(P_{K}(a,z))$ is the difference of the highest and lowest $a$-degree in $P_K(a,z)$. This yields the second statement, thanks to another inequality by Franks-Williams and Morton~\cite{FW,Mo}:
$$2b(K) \geq \text{$a$-span}(P_K(a,z))+2.$$
\end{proof}

\section{Parallel twisting and braid index}

The inequality of Franks-Williams and Morton~\cite{FW,Mo}, used at the end of the last section, remains true under the specialisation $z=\zeta_{2k}-\zeta_{2k}^{-1}$.
We observe that for all but finitely many $k \in \N$,
$$\text{$a$-span}(P_K(a,\zeta_{2k}-\zeta_{2k}^{-1}))=\text{$a$-span}(P_K(a,z)).$$
Furthermore, the $a$-span of $P_K(a,\zeta_{2k}-\zeta_{2k}^{-1})$ is certainly invariant under multiplication with powers of $a^{2k}$. Therefore, the first item of Proposition~\ref{invariance} implies the following statement.

\begin{proposition}
\label{braidindex}
Let $K$ be a knot and $k \geq 3$. Then every knot $K'$ that is related to $K$ by a finite sequence of $(t_{2k})^{\pm 1}$-moves satisfies
$$2b(K') \geq \text{$a$-span}(P_{K}(a,\zeta_{2k}-\zeta_{2k}^{-1}))+2.$$
\end{proposition}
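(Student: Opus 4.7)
The plan is to combine the invariance statement in Proposition~\ref{invariance}(i) with the Franks--Williams--Morton inequality applied to $K'$, after specialising the $z$-variable. The central observation is that the $a$-span of a nonzero Laurent polynomial is unchanged by multiplication with a monomial in $a$, so an $a^{2k}$ factor cannot corrupt the bound.

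First, I would iterate Proposition~\ref{invariance}(i) along the given finite sequence of $(t_{2k})^{\pm 1}$-moves connecting $K$ to $K'$, noting that a $(t_{2k})^{-1}$-move is the inverse of a positive $t_{2k}$-move and therefore multiplies the specialised HOMFLY polynomial by $a^{-2k}$. The resulting identity takes the form
$$P_{K'}(a,\zeta_{2k}-\zeta_{2k}^{-1}) = a^{2kN}\, P_{K}(a,\zeta_{2k}-\zeta_{2k}^{-1})$$
for some integer $N \in \Z$, and consequently
$$\text{$a$-span}(P_{K'}(a,\zeta_{2k}-\zeta_{2k}^{-1})) = \text{$a$-span}(P_{K}(a,\zeta_{2k}-\zeta_{2k}^{-1})).$$

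Second, I would apply the Franks--Williams--Morton inequality to $K'$ and then pass to the specialisation $z = \zeta_{2k}-\zeta_{2k}^{-1}$. Since specialising $z$ to a complex number can produce cancellations in the coefficients of $a$-monomials but cannot create new $a$-degrees, one has $\text{$a$-span}(P_{K'}(a,\zeta_{2k}-\zeta_{2k}^{-1})) \leq \text{$a$-span}(P_{K'}(a,z))$. Chaining these,
$$2b(K') \;\geq\; \text{$a$-span}(P_{K'}(a,z))+2 \;\geq\; \text{$a$-span}(P_{K'}(a,\zeta_{2k}-\zeta_{2k}^{-1}))+2,$$
and substituting the equality from the first step gives precisely the bound claimed in the proposition.

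No substantial obstacle is anticipated; the argument is essentially an assembly of the two ingredients recorded in the paragraph immediately preceding the statement. The only minor nuance is the degenerate case $P_{K}(a,\zeta_{2k}-\zeta_{2k}^{-1})=0$, where the right-hand side should be read as $-\infty$ and the inequality is vacuous.
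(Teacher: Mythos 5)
Your proposal is correct and follows exactly the paper's own reasoning: combine Proposition~\ref{invariance}(i) (so that the specialised HOMFLY polynomials of $K$ and $K'$ differ by a power of $a^{2k}$, hence have equal $a$-span) with the Franks--Williams--Morton inequality for $K'$, noting that specialising $z$ can only decrease the $a$-span. The remark about the degenerate case $P_K(a,\zeta_{2k}-\zeta_{2k}^{-1})=0$ is a sensible extra precaution that the paper leaves implicit.
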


In the case of two-bridge knots~$K$, Murasugi showed that the above inequality is sharp: $2b(K)=\text{$a$-span}(P_K(a,z))+2$.
This implies Proposition~\ref{twobridge}.

For two special families of two-bridge knots, we obtain even better results: let $K_n$ be the family of twist knots with 2 negative crossings, and $2n$ positive crossings. In Rolfsen's notation~\cite{Ro}, the first four knots of this sequence are $4_1,6_1,8_1,10_1$, see Figure~2 for $n=2$. For convenience, we add the trivial knot $K_0=O$. Furthermore, let $T(2,2n+1)$ be the two strand torus knot with $2n+1$ positive crossings.

\begin{proposition}
\label{twist}
\quad
\begin{enumerate}
\item[(i)] For all $n \geq 1$, the set
$$\{k \geq 1 \mid \text{$K_n$ can be unknotted by $(\bar{t}_{2k})^{\pm 1}$-moves}\}$$
coincides with the set of divisors of $n$. Moreover, every knot $K'$ that is related to $K_n$ by a finite sequence of $(t_{2k})^{\pm}$-moves with $k \geq 2$ satisfies
$$b(K') \geq b(K_n).$$
In particular, the knot $K_n$ cannot be unknotted by $(t_{2k})^{\pm 1}$-moves, except for $k=1$.
\item[(ii)] For all $n \geq 1$, the set
$$\{k \geq 1 \mid \text{$T(2,2n+1)$ can be unknotted by $(t_{2k})^{\pm 1}$-moves}\}$$
coincides with the set of divisors of $n$ and $n+1$. Moreover, the knot $T(2,2n+1)$ cannot be unknotted by $(\bar{t}_{2k})^{\pm 1}$-moves, except for $k=1$.
\end{enumerate}
\end{proposition}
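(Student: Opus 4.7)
The plan is to make Propositions~\ref{invariance} and~\ref{braidindex} explicit on the two families by computing their HOMFLY polynomials.

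The torus case is the cleaner of the two. The two-strand skein recursion $P_{L_{k+1}} = a^2 P_{L_{k-1}} + azP_{L_k}$, rescaled via $P_{L_k} = a^k \widetilde{P}_k$, decouples to $\widetilde{P}_{k+1} = z \widetilde{P}_k + \widetilde{P}_{k-1}$; together with the base cases $P_{L_0} = (a^{-1}-a)/z$ and $P_{L_1} = 1$ this yields the closed form
$$P_{T(2,2n+1)}(a,z) = a^{2n} F_{2n+1}(z) + (a^{2n}-a^{2n+2}) F_{2n}(z)/z,$$
where $F_k$ is the Fibonacci polynomial defined by $F_0=0$, $F_1=1$, $F_{k+1} = zF_k + F_{k-1}$. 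Using the cyclotomic identity $F_j(z_0) = (\zeta_{2k}^j - (-1)^j \zeta_{2k}^{-j})/(\zeta_{2k}+\zeta_{2k}^{-1})$ at $z_0 = \zeta_{2k}-\zeta_{2k}^{-1}$ (valid for $k \geq 3$), the necessity in~(ii) reduces, via Proposition~\ref{invariance}(i) and the two-term $a$-structure of the formula above, to the equation $F_{2n+1}(z_0) = 1$; this factors as $(\zeta_k^n-1)(\zeta_k^{n+1}-1)=0$ and so is equivalent to $k\mid n$ or $k\mid n+1$. Sufficiency is direct braid simplification of $\widehat{\sigma_1^{2n+1}}$ to $\widehat{\sigma_1^{\pm 1}}$, and the exceptional case $k=2$ is handled by Proposition~\ref{invariance}(ii) via the evaluation $P_{T(2,2n+1)}(a,0) = (n+1)a^{2n} - na^{2n+2}$, which mod~$2$ is a single monomial of exponent $2n$ or $2n+2$ according to the parity of $n$. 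For the $\bar{t}_{2k}$-statement, $P_{T(2,2n+1)}(\zeta_{2k},z)$ retains a non-zero leading coefficient $\zeta_{2k}^{2n}$ in $z^{2n}$, so it cannot equal~$1$ for $n \geq 1$, $k \geq 2$.

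The twist knot case requires a second skein recursion, because the horizontal smoothing in the $\bar{t}$-twist region does not return a knot of the same family but rather an auxiliary 2-component link $M_n$. The $\bar{t}_{2k}$-criterion in~(i) is the easiest: Fox's mod-$k$ invariance of the Conway polynomial, applied to $\nabla_{K_n}(z) = 1 - nz^2$, reduces it directly to $k\mid n$, and sufficiency is again direct. For the braid-index inequality I would combine Proposition~\ref{braidindex} with Murasugi's sharp equality $2b(K_n) = \text{$a$-span}(P_{K_n}(a,z))+2$ for the two-bridge knot $K_n$; the remaining task is to write down $P_{K_n}$ from the coupled recursion $(K_n, M_n) \to (K_{n-1}, M_{n-1})$ and to check that the extremal $a$-coefficient polynomials of $P_{K_n}(a,z)$ do not vanish at any $z_0 = \zeta_{2k}-\zeta_{2k}^{-1}$, so that the $a$-span is preserved under the cyclotomic specialisation. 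The case $k=2$ is handled analogously via Proposition~\ref{invariance}(ii). The ``in particular'' clause then follows immediately because the unknot has braid index $1 < b(K_n)$.

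The main obstacle is the twist-knot HOMFLY computation. Unlike the torus family, whose HOMFLY is controlled by a single Fibonacci polynomial and whose key divisibility falls out of a clean cyclotomic factorisation, the twist family requires identifying the auxiliary smoothed link $M_n$ at each step and iterating a two-variable recursion. The verification that the resulting extremal $a$-coefficients survive all cyclotomic specialisations is a finite but not formal computation.
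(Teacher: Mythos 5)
Your treatment of part (ii) is correct and essentially complete: the closed Fibonacci-polynomial form of $P_{T(2,2n+1)}$, the reduction of the necessity to $F_{2n+1}(\zeta_{2k}-\zeta_{2k}^{-1})=1$ and its factorisation $(\zeta_k^n-1)(\zeta_k^{n+1}-1)=0$, and the observation that $P_{T(2,2n+1)}(\zeta_{2k},z)$ has non-vanishing leading $z$-coefficient $\zeta_{2k}^{2n}$ together give both claims of that item. The paper gets there faster: it only records that $P_{T(2,2n+1)}(a,z)=a^{2n}f(z)+a^{2n+2}g(z)$ and reads off $2k\mid 2n$ or $2k\mid 2n+2$ directly from the exponent of the surviving monomial, and it deduces the $\bar{t}_{2k}$-statement from Theorem~\ref{fibredknots} (torus knots are fibred) rather than from the HOMFLY specialisation. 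Your route is more explicit but equally valid; note also that your $k=2$ check via Proposition~\ref{invariance}(ii) is vacuous, since $2$ always divides $n$ or $n+1$, so sufficiency already covers that case.

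Part (i) contains a genuine gap. For the braid-index inequality you correctly identify the strategy --- Proposition~\ref{braidindex} combined with Murasugi's equality $2b(K_n)=\text{$a$-span}(P_{K_n}(a,z))+2$, reduced to checking that the extremal $a$-coefficients of $P_{K_n}$ survive every specialisation $z=\zeta_{2k}-\zeta_{2k}^{-1}$ and the mod-$2$ reduction at $z=0$ --- but you do not carry it out: you defer it as ``a finite but not formal computation'' and describe it as requiring a coupled two-family recursion with an $n$-dependent auxiliary link $M_n$. That difficulty is illusory, and resolving it is exactly the content of the paper's proof. Smoothing a crossing in the antiparallel twist region of $K_{n+1}$ joins the two strands and collapses the entire twist region, so the smoothed link is the \emph{fixed} negative Hopf link $H^-$ for every $n$. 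The recursion is therefore the inhomogeneous linear one $P_{K_{n+1}}=a^2P_{K_n}+azP_{H^-}$ with $azP_{H^-}=a^{-2}-1-z^2$, which solves immediately to show that the terms of lowest and highest $a$-degree of $P_{K_n}$ are exactly $a^{-2}$ and $a^{2n}$, with constant coefficients. Since constants vanish at no specialisation of $z$ (and survive reduction mod $2$), the $a$-span is preserved and the argument closes. Without this computation, or an equivalent one, the central claim of part (i) remains unproved in your write-up.
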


\begin{figure}[htb]
\begin{center}
\raisebox{-0mm}{\includegraphics[scale=1.0]{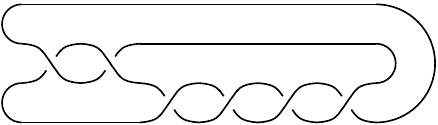}}
\caption{Twist knot $K_2=6_1$.}
\end{center}
\end{figure}

\begin{proof}
For the first statement, suppose that $k$ is a divisor of $n$. Then an obvious sequence of $(\bar{t}_{2k})^{-1}$-moves transforms
$K_n$ into the trivial knot. Next, suppose that $K_n$ can be unknotted by a finite sequence of $(\bar{t}_{2k})^{\pm 1}$-moves, for $k \geq 2$. By Fox' congruence statement mentioned in the introduction, the Alexander-Conway polynomial of $K_n$ must be equal to~$1$ modulo~$k$. An easy computation reveals $\nabla_{K_n}=1-nz^2$, thus~$k$ has to be a divisor of~$n$. For the last part of the first item, we compute the HOMFLY polynomial of $K_n$, via the skein relation:
$$P_{K_{n+1}}(a,z)=a^2P_{K_{n}}(a,z)+azP_{H^{-}}(a,z),$$
for all $n \in \N$, where $H^{-}$ denotes the Hopf link with two negative crossings.
Using $P_{K_0}(a,z)=P_O(a,z)=1$ and $azP_{H^{-}}(a,z)=a^{-2}-1-z^2$, we find that the terms of lowest and highest $a$-degree of $P_{K_n}(a,z)$ are $a^{-2}$ and $a^{2n}$, respectively. As a consequence, for all $k \geq 3$,
$$\text{$a$-span}(P_{K_n}(a,\zeta_{2k}-\zeta_{2k}^{-1}))=\text{$a$-span}(P_{K_n}(a,z)),$$
and the reduction $P_{K_n}(a,0) \in \F_2[a^{\pm 1}]$ also shares the same $a$-span. Now Proposition~\ref{invariance} and Proposition~\ref{braidindex} imply that for all $k \geq 2$, all knots~$K'$ related to $K_n$ by a finite sequence of $t_{2k}$-moves satisfy
$$2b(K') \geq \text{$a$-span}(P_{K'}(a,z))+2 \geq \text{$a$-span}(P_{K_n}(a,z))+2=2b(K_n).$$
The last equality is again a consequence of Murasugi's result on two-bridge knots.

For the second statement, suppose that $k$ is a divisor of $n$ or $n+1$. Then an obvious sequence of $t_{2k}^{-1}$-moves transforms
$T(2,2n+1)$ into the trivial knot, in the guise of $T(2,1)$ or $T(2,-1)$, respectively. This always works for $k=1,2$. Next, suppose that $T(2,2n+1)$ can be unknotted by a finite sequence of $(t_{2k})^{\pm 1}$-moves, for $k \geq 3$. Then, by Proposition~\ref{invariance}, the polynomial $P_{T(2,2n+1)}(a,\zeta_{2k}-\zeta_{2k}^{-1})$ must be a power of $a^{2k}$. An easy induction shows that $P_{T(2,2n+1)}(a,z)$ takes the form $a^{2n}f(z)+a^{2n+2}g(z)$. We conclude that $k$ divides $n$ or $n+1$. The very last statement is an immediate consequence of Theorem~\ref{fibredknots}.
\end{proof}

\bigskip
\noindent
University of Oxford, Radcliffe Observatory, Andrew Wiles Building, Woodstock Rd, Oxford OX2 6GG, UK

\smallskip
\noindent
\texttt{lambert.acampo@maths.ox.ac.uk}

\bigskip
\noindent
Mathematisches Institut, Universit\"at Bern, Sidlerstrasse 5, 3012 Bern, Switzerland

\smallskip
\noindent
\texttt{sebastian.baader@unibe.ch}

\smallskip
\noindent
\texttt{livio.ferretti@unibe.ch}

\smallskip
\noindent
\texttt{levi.ryffel@unibe.ch}

\end{document}